\title{Autonomous second-order ODEs: a geometric approach}
\author{A. J. Pan-Collantes \thanks{
    Department of Mathematics, Universidad de C\'{a}diz, Puerto Real, Spain;\\ 
    email: \texttt{antonio.pan@uca.es}}
\and
J. A. Alvarez-Garcia \thanks{
          Department of Mathematics, IES Jorge Juan, San Fernando, Spain;\\ 
          email: \texttt{jose.alvg@gmail.com}
                  }
        }
\theoremstyle{remark}
\theoremstyle{definition}
\newtheorem{theorem}{Theorem}[section]
\newtheorem{proposition}{Proposition}[section]
\newtheorem{corollary}{Corollary}[section]
\newtheorem{definition}{Definition}[section]
\newtheorem{remark}{Remark}[section]
\newtheorem{example}{Example}[section]
\newenvironment{keywords}{}{}
\newcommand{\contract}{\,\lrcorner\,}
\begin{document}

\newpage

\maketitle

\begin{abstract}
Given an autonomous second-order ordinary differential equation (ODE), we define a Riemannian metric on an open subset of the first-order jet bundle. A relationship is established between the solutions of the ODE and the geodesic curves with respect to the defined metric. We introduce the notion of energy foliation for autonomous ODEs, and highlight its connection to the classical energy concept. Additionally, we explore the geometry of the leaves of the foliation. Finally, the results are applied to the analysis of Lagrangian mechanical systems. In particular, we provide an autonomous Lagrangian for the damped harmonic oscillator.
\end{abstract}

\begin{keywords}
  Second-order ODEs, Lagrangian mechanical system, Riemannian metric, curvature,
\end{keywords}




\section{Introduction} 

Second-order ordinary differential equations (ODEs) are an essential tool for modeling a wide range of nonlinear evolutionary phenomena, especially in systems with one-dimensional dynamics. These equations arise in various fields, including physics, engineering, and biology, where they are employed to describe processes as diverse as mechanical vibrations, chemical reactions, and population dynamics.

However, solving second-order ODEs is often challenging, as no universal algorithm exists to determine their solutions. Over the past few decades, significant research has been dedicated to finding solutions and first integrals for such ODEs \cite{cheb1999integrating,duarte2001solving,muriel2009,yumaguzhin2010differential,pancinf-sym,duartefirstintegrals}. Additionally, extensive research has focused on the qualitative analysis and numerical methods for solving second-order ODEs \cite{al2014research,langkah2017numerical,al2020three}.

In most cases, researchers have focused on specific families of second-order ODEs to gain meaningful insights. In this paper, we focus on autonomous second-order ODEs, whose study remains an active area of research (see \cite{santanaAutonomous} and references therein). These equations are particularly interesting because many of them arise from dynamical systems governed by (not necessarily autonomous) Lagrangians. In particular, understanding the behavior of these equations can lead to deeper insights into the underlying mechanics of physical systems, making them an important topic in both theoretical and applied mathematics.

Recently, there has been a growing interest in associating Riemannian metrics with differential equations \cite{Bayrakdar2018a,Bayrakdar2018,bayrakdar2019geometric,bayrakdar2021curvature,bayrakdar2022geometry,pancollantes2023surfaces,pancollantes2024integration,Bayrakdar2024}, as this approach offers insights into the behavior of these equations. Following this approach, and after introducing some preliminaries in Section \ref{SecPrelim}, we show in Section \ref{SecMetric} that autonomous second-order ODEs induce a Riemannian metric on an open subset of the first-order jet bundle $J^1(\mathbb{R}, \mathbb{R})$. We then explore the geometry of the resulting Riemannian manifold, relating it to the integrability of the ODE. Specifically, we link solutions of the ODE to geodesics of the manifold. We also introduce a minimal foliation in the manifold (Section \ref{SecEnergy}), whose leaves correspond to constant energy surfaces in the context of mechanical systems, and study the geometry of these leaves. In Section \ref{SecLagrangian}, we shift our focus to ODEs derived from Lagrangian systems, illustrating, in particular, how the damped harmonic oscillator can be framed within an autonomous Lagrangian formulation.

\section{Preliminaries}\label{SecPrelim}

\subsection{Jet bundles and second-order ODEs}
A central tool in the study of ODEs is the use of jet bundles, which provide a natural geometric context for understanding the structure and solutions of these equations. In this work we will use the first-order jet bundle $J^1(\mathbb R,\mathbb R)$ to study autonomous second-order ODEs that can be written in the form
\begin{equation}\label{ODE1vez}
    u_2=\phi(u,u_1).
\end{equation}
Here, $(x,u,u_1)$ stand for the standard coordinates of $J^1(\mathbb R,\mathbb R)$, with $x$ and $u$ representing the independent and dependent variables, respectively, and $u_1$ denoting the first derivative of $u$ with respect to $x$. Also, $\phi$ denotes a smooth function defined on an open subset $U\subseteq J^1(\mathbb R,\mathbb R)$. The contact form on $J^1(\mathbb{R}, \mathbb{R})$ is defined as
\begin{equation}\label{contactform}
    \theta = - u_1 dx+du,
\end{equation}
and it captures the first-order differential relations between $u$, $x$, and $u_1$ \cite{saunders1989geometry,olver86}.

Recall that the vector field associated to equation \eqref{ODE1vez},
\begin{equation}\label{campoA}
    \partial_x+u_1\partial_u+\phi\partial_{u_1},
\end{equation}
which is defined on $U$, encodes all the relevant information about the equation, in the following sense. Given a smooth function $f:I\subseteq \mathbb R \to \mathbb R$, its first-order prolongation \cite{saunders1989geometry} is the curve $\mbox{j}^1 f: I \to J^1(\mathbb R,\mathbb R)$ defined by the expression
$$
(\mbox{j}^1 f)(x)=(x,f(x),f'(x)).
$$
It turns out that a smooth function $f$ is a solution to equation \eqref{ODE1vez} if and only if its first-order prolongation $\mbox{j}^1 f$ is an integral curve of the vector field \eqref{campoA} (see \cite{olver86,stephani} for the details).

\subsection{Riemannian geometry}
Recall that a Riemannian manifold is a smooth manifold $M$ equipped with a Riemannian metric $g$, i.e., a two-times covariant symmetric tensor field, positive definite, and hence non-degenerate. Every Riemannian metric gives rise to a uniquely determined torsionless metric connection $\nabla$, called the Levi-Civita connection.

Consider a 3-dimensional Riemannian manifold $(M,g)$, with an orthonormal frame $(e_1,e_2,e_3)$, and its corresponding dual coframe $(\omega^1,\omega^2,\omega^3)$. The connection form of the Levi-Civita connection is defined as the matrix of 1-forms $\Theta=(\Theta^i_{\,\,j})$, satisfying
\begin{equation}\label{covderivative}
    \nabla_{e_i}e_j=\sum_{k=1}^3 e_i\contract \Theta^k_{\,\,j} e_k, \quad 1\leq i,j\leq 3,
\end{equation} 
where $\contract$ denotes the interior product.

These 1-forms can be obtained from Cartan's first structural equation
\begin{equation}\label{firstcartan}
    d\omega^i = \sum_{k=1}^3 \omega^k \wedge \Theta^i_{\,\, k},\quad 1\leq i\leq 3,
\end{equation}
and the condition $\Theta^i_{\,\, j} = - \Theta^j_{\,\, i}$ (derived from the orthonormality of the frame). Details can be found, for instance, in \cite{chen1999lectures,Morita,ivey2016cartan}. 

On the other hand, recall that the notion of geodesic is used in differential geometry to extend to arbitrary spaces the idea of a straight line in flat spaces. A curve $\gamma: I\subseteq \mathbb R \to M$ is called a geodesic if
$$
\nabla_{\dot{\gamma}(t)}\dot{\gamma}(t)=0
$$
for every $t\in I$. It turns out that if a vector field $X$ satisfies $\nabla_X X=0$ then its integral curves are geodesics of the manifold \cite{lee2006curvature,docarmoriemannian,Carinena2023}.

Moreover, the behavior of geodesics is influenced by the curvature of the manifold. In Riemannian geometry, the curvature plays a crucial role in determining how geodesics diverge or converge, giving insight into the local and global geometry of the space. The curvature of the manifold is encoded in the Levi-Civita connection and can be described through the curvature 2-forms $\Omega^i_{\,\, j}$, as defined by Cartan's second structure equation
\begin{equation}\label{curvformdef}
    \Omega^i_{\, j} = d \Theta^i_{\, j} +\sum_{k=1}^3 \Theta^i_{\, k} \wedge \Theta^k_{\, j}.
\end{equation}
The components of the Riemann curvature tensor in the given frame are related to the curvauture 2-forms by the expression \cite{Morita,lee2006curvature}
\begin{equation}\label{cartansecond}
    R^i_{\,jab}=\Omega^i_{\, j} (e_a,e_b).
\end{equation}

The sectional curvature along specific planes within the tangent space can be computed from the components of the Riemann curvature tensor. For instance, the sectional curvatures along the planes spanned by the pairs $\{e_1,e_2\}$, $\{e_1,e_3\}$, and $\{e_2,e_3\}$ are given by the components $R^1_{\,\,212}$, $R^1_{\,\,313}$ and $R^2_{\,\,323}$, respectively. These components provide essential geometric information about the curvature of the manifold along the chosen planes.

Now, consider a 2-dimensional manifold $\Sigma$ embedded into $M$, in such a way that the given frame $(e_1,e_2,e_3)$ is adapted to $\Sigma$, i.e., $\omega^3|_{T\Sigma}=0$. The surface $\Sigma$ inherits a Riemannian metric from the ambient manifold, with its corresponding Levi-Civita connection. We will denote by $\tilde{\omega}^1,\tilde{\omega}^2$ the restrictions of $\omega^1,\omega^2$ to $T\Sigma$, and by $\tilde{\Theta}$ and $\tilde{\Omega}$ the connection forms and the curvature forms, respectively, of the inherited connection. 

By restriction to $T\Sigma$ of the first Cartan equation \eqref{firstcartan} we have
$$
\tilde{\Theta}^i_{\,\,j}=\Theta^i_{\,\,j} |_{T\Sigma}, \quad i,j=1,2.
$$
In addition, there must exist smooth functions $s_{ij}$ defined on $\Sigma$, such that 
\begin{equation}
    \begin{aligned}
    \Theta^3_{\,\,1}|_{T\Sigma} &= s_{11} \tilde{\omega}^1 + s_{12} \tilde{\omega}^2, \\
    \Theta^3_{\,\,2}|_{T\Sigma} &= s_{21} \tilde{\omega}^1 + s_{22} \tilde{\omega}^2,
    \end{aligned}
\end{equation}    
with $s_{12}=s_{21}$. Observe that we can express the functions $s_{ij}$ in terms of the connection forms as follows:
\begin{equation}\label{seconfform}
    s_{ij}=\tilde{e}_j \contract \Theta^3_{\,\,i}|_{T\Sigma}, \quad i,j=1,2,
\end{equation}
where $\tilde{e}_j=e_j|_{T\Sigma}$.
The reader may refer to \cite{spivak1999comprehensive3} for further details.

The shape operator $S$ is defined as 
$$
S=\sum_{i,j=1}^2 s_{ij} \tilde{\omega}^i\otimes \tilde{e}_j,
$$ 
and it has two independent invariants: the extrinsic Gaussian curvature,
\begin{equation}
    K_{\text{ext}}=s_{11}s_{22}-s_{12}s_{21},
\end{equation}
and the mean curvature,
\begin{equation}
    H=\frac{1}{2}(s_{11}+s_{22}).
\end{equation}

Regarding the intrinsic geometry of $\Sigma$, denote by $\tilde{R}$ the Riemann curvature tensor of $\Sigma$ with respect to the inherited metric. The curvature 2-form $\tilde{\Omega}$ satisfies, by definition,   
$$
\begin{aligned}
    \tilde{\Omega}^i_{\,j}&=\Omega^i_{\,j}|_{T\Sigma}+\Theta_1^3|_{T\Sigma}\wedge \Theta_2^3|_{T\Sigma}\\
    &=\Omega^i_{\,j}|_{T\Sigma}+K_{\text{ext}}\tilde{\omega}^1\wedge \tilde{\omega}^2.
\end{aligned}
$$
From here, and according to equation \eqref{cartansecond}, it is obtained Gauss' equation,
\begin{equation}\label{gausseq}
    K_{\text{\text{int}}}=R^1_{\,\,212}+K_{\text{ext}},
\end{equation}
where $K_{\text{int}}=\tilde{R}^1_{\,\,212}$ is the intrinsic Gaussian curvature of $\Sigma$.

\section{Riemannian metric associated to autonomous second-order ODEs}\label{SecMetric}

In this section we introduce the notion of Riemannian manifold associated with a given autonomous second-order ODE.
\begin{definition}\label{def:asurf}
Consider a second-order ODE in the form \eqref{ODE1vez}. We define the associated 3-dimensional Riemannian manifold as the open submanifold given by 
$$
M=\{(x,u,u_1)\in U: u_1\neq 0\}\subseteq J^1(\mathbb R,\mathbb R),
$$ 
endowed with the Riemannian metric
\begin{equation}\label{metric}
g=(1 + u_1^2) dx^2 - 2 u_1 dx du + \left( 1 + \frac{\phi^2}{u_1^2} \right) du^2 - 2 \frac{\phi}{u_1} du du_1 + du_1^2.
\end{equation}
\end{definition}

Interestingly, the vector field \eqref{campoA} associated with equation \eqref{ODE1vez} has constant unit length, as can easily be verified. Moreover, the vector field $\partial_{u_1}$ also has unit length, and it is orthogonal to the associated vector field. By standard procedures, we can complete this pair of vector fields, to get the orthonormal frame $( e_1, e_2,e_3)$ defined by
\begin{equation}\label{frame}
    \begin{aligned}
        & e_1=\partial_x+u_1 \partial_u+\phi \partial_{u_1},\\
        & e_2=\partial_u+\frac{\phi}{u_1} \partial_{u_1},\\
        & e_3=\partial_{u_1}.
        \end{aligned}
\end{equation}

The corresponding dual coframe $(\omega^1,\omega^2,\omega^3)$ is given by the 1-forms
\begin{equation}\label{coframe}
    \begin{aligned}
& \omega^1=dx,\\
& \omega^2=-u_1 dx+du,\\
& \omega^3=-\frac{\phi}{u_1} du+du_1,
\end{aligned}
\end{equation}
being $\omega^2$ the contact form $\theta$ of $J^1(\mathbb R,\mathbb R)$ given in equation \eqref{contactform}. Observe that
\begin{equation}\label{d_omegas}
    \begin{aligned}
     d \omega^1&=0,\\
     d \omega^2&=\frac{\phi}{u_1}  \omega^1 \wedge \omega^2+ \omega^1 \wedge \omega^3,\\
     d \omega^3&=\left(\phi_{u_1}-\frac{\phi}{u_1}\right) \omega^1 \wedge \omega^3+\frac{u_1 \phi_{u_1}-\phi}{u_1^2} \omega^2 \wedge \omega^3,
    \end{aligned}
\end{equation}
where subscripts denote partial derivatives (this notation will be used throughout the paper). From \eqref{d_omegas}, and using Cartan's first structural equation \eqref{firstcartan}, we obtain the connection form
\begin{equation}\label{connectionform}  
    \Theta=
\begin{pmatrix}
0 &-\frac{\phi}{u_1} \omega^2-\frac{1}{2} \omega^3 & -\frac{1}{2} \omega^2-\left(\phi_{u_1}-\frac{\phi}{u_1}\right) \omega^3\\ 
\frac{\phi}{u_1} \omega^2+\frac{1}{2} \omega^3 &0 &-\frac{1}{2} \omega^1-\frac{u_1\phi_{u_1}-\phi}{u_1^2}\omega^3\\
\frac{1}{2} \omega^2+\left(\phi_{u_1}-\frac{\phi}{u_1}\right) \omega^3 & \frac{1}{2} \omega^1+\frac{u_1\phi_{u_1}-\phi}{u_1^2}\omega^3 &0
\end{pmatrix}.
\end{equation}

We are now in a position to discuss the link between the geodesic curves of the Riemannian manifold $(M,g)$ and the solutions to equation \eqref{ODE1vez}. Recall that the integral curves of $e_1$ are precisely the first-order prolongation of solutions of equation \eqref{ODE1vez}. And, on the other hand, a straightforward calculation yields
\begin{equation}\label{geodVF}
    \nabla_{e_1}e_1=e_1\contract \Theta^k_{\,\,1} e_k=0,
\end{equation}
so the integral curves of $e_1$ are geodesic curves. Therefore, given a smooth function $f$ which is a solution to equation \eqref{ODE1vez}, then the curve $\mbox{j}^1 f$ is a geodesic of the manifold. The converse is not true, but we can establish the following weaker version:

\begin{proposition}\label{geodesicas}
    Suppose equation \eqref{ODE1vez} satisfies $\big(\frac{\phi}{u_1}\big)_{u_1}\neq 0$. If a smooth function $f$ is such that the curve $\mbox{j}^1 f$ is a geodesic, then $f$ is a solution of \eqref{ODE1vez}.
\end{proposition}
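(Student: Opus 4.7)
The plan is to expand the tangent vector of $\gamma:=\mbox{j}^1 f$ in the orthonormal frame \eqref{frame} and then impose the geodesic equation. In the coordinate basis $\dot\gamma=\partial_x+f'\partial_u+f''\partial_{u_1}$; along $\gamma$ one has $u_1=f'$, so the $e_2$-coefficient vanishes identically, and writing $\psi(x):=f''(x)-\phi(f(x),f'(x))$ for the defect from \eqref{ODE1vez} one obtains the clean expression $\dot\gamma=e_1+\psi\,e_3$ along $\gamma$. The proposition then reduces to showing $\psi\equiv 0$.

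Next I would compute $\nabla_{\dot\gamma}\dot\gamma$ using the Leibniz rule for covariant differentiation along a curve. Since $\nabla_{e_1}e_1=0$ by \eqref{geodVF}, only the covariant derivatives $\nabla_{e_3}e_1$, $\nabla_{e_1}e_3$, $\nabla_{e_3}e_3$, together with the scalar term $\psi'\,e_3$ coming from differentiating the coefficient, contribute. Reading these derivatives off from the connection matrix \eqref{connectionform} via \eqref{covderivative} and collecting terms yields a decomposition $\nabla_{\dot\gamma}\dot\gamma=\alpha\,e_1+\beta\,e_2+\delta\,e_3$ in which both $\alpha$ and $\beta$ are proportional to $\psi^2$. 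Crucially, the $e_2$-component comes out precisely as $-(\phi/u_1)_{u_1}\,\psi^2\,e_2$.

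The geodesic hypothesis $\nabla_{\dot\gamma}\dot\gamma=0$ then forces $(\phi/u_1)_{u_1}\,\psi^2=0$ along $\gamma$, and the standing assumption $(\phi/u_1)_{u_1}\neq 0$ immediately gives $\psi\equiv 0$, i.e.\ $f''=\phi(f,f')$. The main obstacle is the bookkeeping in the second step: one has to pair each $\Theta^k_{\,\,j}$ correctly with $e_1$ and $e_3$, and remember that $\psi$ is a function of the parameter $x$ along $\gamma$ rather than a function on $M$, so that $\dot\gamma(\psi)=\psi'(x)$ is an ordinary derivative and not a directional derivative on the manifold. Once that computation is done carefully, the non-degeneracy hypothesis collapses the three components of $\nabla_{\dot\gamma}\dot\gamma=0$ to the single conclusion $\psi\equiv 0$ in one line.
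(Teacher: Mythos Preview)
Your proposal is correct and follows essentially the same route as the paper. The paper likewise writes $(\mbox{j}^1 f)'=e_1+(f''-\phi)\,e_3$, computes $\nabla_{(\mbox{j}^1 f)'}(\mbox{j}^1 f)'$ from the connection form \eqref{connectionform}, and obtains the $e_2$-component $(f''-\phi)^2\,\dfrac{\phi-u_1\phi_{u_1}}{u_1^2}$ (which is exactly your $-(\phi/u_1)_{u_1}\,\psi^2$), concluding $f''=\phi$ from the hypothesis $(\phi/u_1)_{u_1}\neq 0$.
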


\begin{proof}

    Assume that $\mbox{j}^1 f$ is a geodesic curve. We can express the tangent vector field to the curve $\mbox{j}^1f$ in the frame $( e_1,e_2,e_3 )$ as
    $$
    (\mbox{j}^1 f)'(x)=e_1+(f''(x)-\phi) e_3.
    $$

    If $\mbox{j}^1 f$ is a geodesic curve, then 
    \begin{equation}\label{derivcovprol}
        \begin{aligned}
            \nabla_{(\mbox{j}^1 f)'(x)} (\mbox{j}^1 f)'(x)&=(f''(x)-\phi)^2\frac{\phi-u_1 \phi_{u_1}}{ u_1}e_1\\
            &\quad +(f''(x)-\phi)^2 \frac{\phi-u_1 \phi_{u_1}}{ u_1^2}e_2\\
            &\quad +\left(f'''(x)-e_1(\phi)-\frac{\phi}{u_1}(f''(x)-\phi)\right) e_3=0.
        \end{aligned}        
            \end{equation}
    Then, since $\big(\frac{\phi}{u_1}\big)_{u_1}\neq 0$, we have $\phi-u_1 \phi_{u_1}\neq 0$. Therefore,
            $$
            f''(x)-\phi=0,
            $$
            thus completing the proof.

\end{proof}
\begin{remark}
    Proposition \ref{geodesicas} does not apply to equations satisfying $\big(\frac{\phi}{u_1}\big)_{u_1}= 0$. For example, if we consider the equation $u_2=u_1$, it can be checked that  the function $f(x)=x+e^x$ is not a solution, but the prolongation $\mbox{j}^1 f$ is a geodesic curve, since it verifies \eqref{derivcovprol}.
    
    Nevertheless, this family of equations takes the form 
    \begin{equation}\label{odeespecial}
    u_2 = K(u) u_1,
    \end{equation}
    so they can be fully integrated by quadratures. Indeed, observe that $u_1 - \int K(u)\, du$ is a first integral of equation \eqref{odeespecial}:
    $$
e_1 \left(u_1 - \int K(u)\, du\right) = \phi - u_1 K(u) = 0.
    $$
Now, the family of first-order ODEs
    $$
    u_1 - \int K(u)\, du = C,
$$
where $C \in \mathbb{R}$, can always be solved by another quadrature, and their solutions correspond to solutions of \eqref{odeespecial}.
\end{remark}

To continue the analysis of the geometry of the manifold $(M,g)$, we compute the curvature of the connection, i.e., the matrix of 2-forms $\Omega=(\Omega^i_j)$ given by equation \eqref{curvformdef}, whose explicit expression is too involved to be included here. Nevertheless, by using equation \eqref{cartansecond}, a straightforward computation yields the following components of the Riemann curvature tensor with respect to the frame $(e_1,e_2,e_3)$:
\begin{subequations}\label{Ksec}
	\begin{align}
            R^1_{\,\,212}&=\Omega^1_{\, 2} (e_1,e_2)=\frac{1}{4}-\frac{e_1(\phi)}{u_1}, \label{Ksec1}\\
            R^1_{\,\,313}&=\Omega^1_{\, 3} (e_1,e_3)=- \frac{3}{4}+ \phi_u- \phi_{u_1}^2  - \phi \phi_{u_1u_1} - \phi_{uu_1} u_1 + \frac{3\phi \phi_{u_1}}{u_1}  - \frac{2 \phi^2}{u_1^2}, \label{Ksec2}\\
            R^2_{\,\,323}&=\Omega^2_{\, 3} (e_2,e_3)=\notag\\
            &=\frac{1}{4} - \frac{\phi \phi_{u_1} +\phi_{u u_1 }}{u_1}  - \frac{\phi \phi_{u_1 u_1}-\phi^2-\phi_u+\phi_{u_1}^2}{u_1^2}  +  \frac{4\phi \phi_{u_1}} {u_1^3}  - \frac{3\phi^2}{u_1^4}.\label{Ksec3}
    \end{align}
\end{subequations}
Recall that these components correspond to the sectional curvatures along the planes generated by the pairs $\{e_1,e_2\}$, $\{e_1,e_3\}$ and $\{e_2,e_3\}$, respectively.




\begin{example}\label{ladelrho}
    Consider the second-order ODE
    \begin{equation}\label{odeladelrho}
        u_2=\sqrt{1-\kappa u_1^2},
    \end{equation}
    \sloppy where $\kappa \neq 0$. In this case, the associated Riemannian manifold consists of ${M=\{(x,u,u_1)\in J^1(\mathbb R,\mathbb R): u_1\neq 0\}}$, together with the Riemannian metric
    $$
    g=(1 + u_1^2) dx^2 - 2 u_1 dx du + \left( 1-\kappa + \frac{1}{u_1^2} \right) du^2 - 2 \frac{\sqrt{1-\kappa u_1^2}}{u_1} du du_1 + du_1^2.
    $$
    According to the results above, we have the following sectional curvatures:
    \begin{subequations}\label{KsecEj1}
        \begin{align}
                R^1_{\,\,212}&=\frac{1}{4}+\kappa, \label{KsecEj11}\\
                R^1_{\,\,313}&=- \frac{3}{4}-\frac{2}{u_1^2}, \label{KsecEj12}\\
                R^2_{\,\,323}&=\frac{1}{4} +\frac{1}{u_1^2}-\frac{3}{u_1^4}.\label{KsecEj13}
        \end{align}
    \end{subequations}
\end{example}

\begin{example}\label{curvatura0}
The autonomous second-order ODE
\begin{equation}\label{ODEKcero}
u_2=\frac{4u_1^2+u^2+u}{2 u+1},
\end{equation}
gives rise to a Riemannian manifold which has zero sectional curvature along the planes generated by $\{e_1,e_2\}$. Indeed, from \eqref{Ksec1} we have
$$
R^1_{\,\,212}=\frac{1}{4}-\frac{e_1\left(\frac{4u_1^2+u^2+u}{2 u+1}\right)}{u_1}=0.
$$

\end{example}

The sectional curvature $R^1_{\,\,212}$ plays a significant role in understanding the geometric structure of the manifold. In particular, it is closely tied to the foliation introduced in the following section.

\section{Energy foliation}\label{SecEnergy}

We now turn our attention to a key geometric feature of the manifold $(M,g)$, namely the existence of a foliation, which we will refer to as the energy foliation, related to the integrability of the differential equation \eqref{ODE1vez}.

Consider the distribution given by
$$
D_p:=\{v\in T_pM:v\contract (\omega^3)_p=0\},
$$
for each $p\in M$. This distribution is spanned by the vector fields $\{e_1,e_2\}$. Thus, it is involutive, since
$$
[e_1,e_2]=-\frac{\phi}{u_1} e_2.
$$

By Frobenius' theorem \cite{lee2013smooth}, there exists a foliation $\mathcal{E}$ on $M$ such that the tangent space to the leaf at $p\in M$ is $D_p$. The leaves are given locally by the level sets of a certain smooth function $E:M\to \mathbb R$, i.e., in a neighborhood $V$ of $p$, the leaves are described by
$$
\Sigma_C=\{p\in V: E(p)=C\},\,\, C\in\mathbb R.
$$   
Since $T_p\Sigma_C=D_p$, we have that such a function $E$ must satisfy
\begin{equation}\label{primitiva}
    dE=\mu \omega^3,
\end{equation}
for a non-vanishing smooth function $\mu$ defined on $V$. Equivalently, $E$ must be a solution to the following homogeneous linear partial differential equation (PDE)
    \begin{equation}\label{conditionE}
        E_u+\frac{\phi}{u_1}E_{u_1} =0.
    \end{equation}

\begin{definition}\label{GenEnFol}
    The foliation $\mathcal{E}$ on $M$ will be called the energy foliation of equation \eqref{ODE1vez}, or simply the energy of the equation.
\end{definition}

\begin{remark} 
    Note that while the energy foliation $\mathcal{E}$ of an autonomous second-order ODE can be locally defined by a function $E$, it can equally well be described by any other function that is functionally dependent on $E$. As we will see in Theorem~\ref{generalEnergy}, in classical examples arising in mechanics one of these functions is, precisely, the classical notion of energy of the system, thus justifying the terminology. However, for a general autonomous second-order ODE, it is important to clarify that we cannot refer to any specific function $E$ as \emph{the energy}, since there are many such functions, and no single one is a natural or preferred choice.
\end{remark}

\begin{remark}\label{lagrang_aut}
    Given any function $E$ satisfying \eqref{conditionE} we can solve the differential equation
    \begin{equation}\label{eqLagrang}
        u_1L_{u_1}-L=E
    \end{equation}
to determine a Lagrangian $L=L(u,u_1)$ suitable for equation \eqref{ODE1vez}. This Lagrangian may be considered non-standard (see \cite{Khan2020InverseVP} and references therein). Indeed, a particular solution to \eqref{eqLagrang} is given by
\begin{equation}\label{nonSTDLag}
    L(u, u_1) = u_1 \int \frac{E(u, u_1)}{u_1^2} \, d u_1.
\end{equation}
It can be checked that the corresponding Euler-Lagrange equation for this Lagrangian is
$$
-E_u-\frac{u_2}{u_1}E_{u_1}=0,
$$
which, by virtue of \eqref{primitiva}, is equivalent to equation \eqref{ODE1vez}. This establishes the existence of local solutions to the inverse problem of the calculus of variations for autonomous second-order ODEs, a topic of ongoing relevance in mathematical physics \cite{douglas1941solution,torres2006hamiltonians,del2009hamiltonian,Khan2020InverseVP}. Importantly, the Lagrangians obtained are themselves autonomous.
\end{remark}

\begin{example}\label{EFladelrho}
In the case of equation \eqref{odeladelrho} in Example \ref{ladelrho}, we can find a function determining its corresponding energy foliation by solving the PDE \eqref{conditionE}:
\begin{equation}\label{PDEkappa}
\begin{aligned}
    u_1 E_u+\sqrt{1-\kappa u_1^2}E_{u_1}=0.
\end{aligned}
\end{equation}
It can be checked that a solution to equation \eqref{PDEkappa} is the smooth function
$$
E=u+\frac{1}{\kappa}\sqrt{1-\kappa u_1^2}.
$$
So the leaves of the energy foliation $\mathcal{E}$ of equation \eqref{odeladelrho} are the surfaces in $M$ given by
$$
\Sigma_C=\left\{\left(x,u,u_1\right) \in M : u+\frac{1}{\kappa}\sqrt{1-\kappa u_1^2}=C\right\}, \quad C\in \mathbb R.
$$
    According to Remark \ref{lagrang_aut}, and taking into account \eqref{nonSTDLag}, the smooth function
\begin{equation}\label{lagranKcte}
L(u,u_1)=-u-\frac{1}{\kappa} \left(\sqrt{1-\kappa u_1^2}+\sqrt{\kappa} u_1 \arcsin(\sqrt{\kappa}u_1)\right),
\end{equation}
in case $\kappa>0$, or the smooth function
\begin{equation}\label{lagranKcteneg}
    L(u,u_1)=-u-\frac{1}{\kappa} \left(\sqrt{1-\kappa u_1^2}+\sqrt{-\kappa} u_1 \text{arcsinh}(\sqrt{-\kappa}u_1)\right),
\end{equation}
for $\kappa<0$, is a solution to equation \eqref{eqLagrang}, so it is a non-standard autonomous Lagrangian whose Euler-Lagrange equation is \eqref{odeladelrho}.
\end{example}

\begin{example}
To find the energy foliation for the second-order ODE \eqref{ODEKcero} in Example \ref{curvatura0}, we solve the PDE
\begin{equation}\label{PDEK0}
    4(2u+1)u_1 E_u+(4 u_1^2+u^2+u)E_{u_1}=0.
\end{equation}
It can be checked that the smooth function
$$
E=\frac{ 1}{2u+1}u_1^2-\frac{4 u^2+2u+1}{32u+16}
$$
is a particular solution to \eqref{PDEK0}, and its level sets define the energy foliation of \eqref{ODEKcero}.

On the other hand, we use the expression \eqref{nonSTDLag} in Remark \ref{lagrang_aut} to obtain
$$
L=\frac{ 1}{2u+1}u_1^2+\frac{4 u^2+2u+1}{32u+16},
$$
which is an autonomous Lagrangian whose corresponding Euler--Lagrange equation is \eqref{ODEKcero}. In this case, the Lagrangian takes the form of a kinetic energy term plus a potential energy term, and the function $E$ can be considered as a mechanical energy.
\end{example}

In the rest of this section we are going to explore the geometry of the leaves of the energy foliation $\mathcal{E}$. The metric given by \eqref{metric} can be written as
$$
g=\omega^1\otimes \omega^1+\omega^2\otimes \omega^2+\omega^3\otimes \omega^3,
$$
since the coframe \eqref{coframe} is orthonormal. As $\omega^3|_{\Sigma_C} \equiv 0$, the induced metric on the surface $\Sigma_C$ is
\begin{equation}\label{g_restringida}
    \tilde{g}=\tilde{\omega}^1\otimes \tilde{\omega}^1+\tilde{\omega}^2\otimes \tilde{\omega}^2,
\end{equation}
where $\tilde{\omega}^i$ denotes the restriction of $\omega^i$ to $\Sigma_C$, for $i=1,2$.

Correspondingly, the shape operator of $\Sigma_C$ is given by
$$
S=\frac{1}{2} \tilde{\omega}^1\otimes \tilde{e}_2+\frac{1}{2} \tilde{\omega}^2\otimes \tilde{e}^1,
$$
since we have, from equation \eqref{seconfform} together with \eqref{connectionform},
$$
s_{11}=0, s_{12}=s_{21}=\frac{1}{2}, s_{22}=0.
$$
Then, we have the following result regarding the extrinsic geometry of $\Sigma_C$ within $M$:
\begin{proposition}\label{minimal}
    The leaves of the energy foliation are minimal surfaces with constant extrinsic curvature.
\end{proposition}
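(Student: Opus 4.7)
The proof is essentially immediate once one has the shape operator coefficients $s_{11}=0$, $s_{12}=s_{21}=\tfrac{1}{2}$, $s_{22}=0$ displayed just above the statement, so my plan is to leverage those and simply read off the two invariants that define the claim.

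First I would recall the two extrinsic invariants introduced in the preliminaries: the mean curvature $H=\tfrac{1}{2}(s_{11}+s_{22})$ and the extrinsic Gaussian curvature $K_{\mathrm{ext}}=s_{11}s_{22}-s_{12}s_{21}$. A surface is minimal precisely when $H\equiv 0$, and it has constant extrinsic curvature when $K_{\mathrm{ext}}$ is a constant function on $\Sigma_C$.

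Next I would substitute the values of the $s_{ij}$ that were obtained just before the proposition via formula \eqref{seconfform} applied to the entries of the connection matrix \eqref{connectionform}. This gives
\begin{equation*}
H=\tfrac{1}{2}(0+0)=0,\qquad K_{\mathrm{ext}}=0\cdot 0-\tfrac{1}{2}\cdot\tfrac{1}{2}=-\tfrac{1}{4}.
\end{equation*}
The first identity establishes that every leaf $\Sigma_C$ is a minimal surface of $(M,g)$; the second shows that its extrinsic curvature is the constant $-\tfrac{1}{4}$, independently of the leaf and of the ODE one started with.

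The only delicate point that I would double-check is that the orthonormal frame $(e_1,e_2,e_3)$ is indeed adapted to each leaf in the sense required by the preliminaries, i.e.\ $\omega^3|_{T\Sigma_C}=0$. This however follows from the very definition of the energy foliation, since $D_p=\ker(\omega^3)_p$ and $T_p\Sigma_C=D_p$; in particular $\tilde{e}_1=e_1|_{\Sigma_C}$ and $\tilde{e}_2=e_2|_{\Sigma_C}$ are well defined and span $T\Sigma_C$, so the computation of the $s_{ij}$ via \eqref{seconfform} is legitimate. No genuine obstacle is expected: the statement is really a direct corollary of the shape-operator identification that precedes it.
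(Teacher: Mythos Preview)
Your proposal is correct and follows exactly the same approach as the paper: substitute the shape-operator coefficients $s_{11}=s_{22}=0$, $s_{12}=s_{21}=\tfrac{1}{2}$ into the formulas for $H$ and $K_{\mathrm{ext}}$ to obtain $H=0$ and $K_{\mathrm{ext}}=-\tfrac{1}{4}$. Your added remark verifying that the frame is adapted to the leaves (via $\omega^3|_{T\Sigma_C}=0$) is a welcome sanity check that the paper leaves implicit.
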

\begin{proof}
The mean curvature for each of these surfaces is given by 
$$
H=\frac{1}{2}(s_{11}+s_{22})=0,
$$
so they are minimal surfaces. 

On the other hand, the extrinsic Gauss curvature is 
$$
K_{\text{ext}}=s_{11}s_{22}-s_{12}s_{21}=-\frac{1}{4}.
$$    
\end{proof}

\begin{remark}
    According to this result, the energy of an autonomous second-order ODE forms a minimal foliation of the associated manifold $(M,g)$. The study of this kind of foliations is an area of significant interest in differential geometry \cite{sullivan1979homological,oshikiri1981remark,haefliger1980some,oshikiri1987some,moser2006minimal}.
\end{remark}

On the other hand, with respect to the intrinsic geometry of $\Sigma_C$, we have the following immediate consequence of equations \eqref{gausseq} and \eqref{Ksec1}:
\begin{corollary}\label{curvatureKE}
    The intrinsic Gauss curvature of $\Sigma_C$ is
\begin{equation}\label{Kintrinsic}
    K_{\text{int}}=-\frac{e_1(\phi)}{u_1}.
\end{equation}
\end{corollary}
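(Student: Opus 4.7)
The corollary is an immediate consequence of Gauss' equation \eqref{gausseq} together with results already established in the paper, so my plan is simply to assemble the pieces rather than compute anything new.

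First I would recall Gauss' equation as stated in the preliminaries, namely
\[
K_{\text{int}} = R^1_{\,\,212} + K_{\text{ext}},
\]
which relates the intrinsic Gaussian curvature of the embedded surface $\Sigma_C$ to the sectional curvature of the ambient manifold along the tangent plane $\{e_1,e_2\}$ and the extrinsic Gaussian curvature. Since the frame $(e_1,e_2,e_3)$ is adapted to $\Sigma_C$ (the annihilator of $\omega^3$ is precisely the tangent distribution $D$ of the energy foliation), Gauss' equation applies directly in the form written above.

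Next I would substitute the two known ingredients: from \eqref{Ksec1} we have
\[
R^1_{\,\,212} = \frac{1}{4} - \frac{e_1(\phi)}{u_1},
\]
and from the computation of the shape operator preceding Proposition \ref{minimal} we have $s_{11}=s_{22}=0$ and $s_{12}=s_{21}=\tfrac{1}{2}$, whence
\[
K_{\text{ext}} = s_{11}s_{22} - s_{12}s_{21} = -\frac{1}{4}.
\]
Plugging both into Gauss' equation, the constant terms $\tfrac{1}{4}$ and $-\tfrac{1}{4}$ cancel and we are left with $K_{\text{int}} = -e_1(\phi)/u_1$, which is exactly \eqref{Kintrinsic}.

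There is no real obstacle here; the only thing worth double-checking is the sign convention in Gauss' equation as stated in the preliminaries (the paper writes it with a plus sign, consistent with the derivation via $\tilde{\Omega}^i_{\,j} = \Omega^i_{\,j}|_{T\Sigma} + K_{\text{ext}}\,\tilde{\omega}^1\wedge\tilde{\omega}^2$). Once one adopts this convention, the cancellation of the $\tfrac{1}{4}$ terms is automatic and the proof reduces to a one-line substitution, which is presumably why the authors present the result as a corollary rather than a theorem.
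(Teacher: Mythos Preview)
Your proposal is correct and follows precisely the same route as the paper: the authors state the corollary as an ``immediate consequence of equations \eqref{gausseq} and \eqref{Ksec1}'', which is exactly the substitution of $R^1_{\,\,212}=\tfrac14-e_1(\phi)/u_1$ and $K_{\text{ext}}=-\tfrac14$ into Gauss' equation that you carry out. There is nothing to add.
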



\begin{example}
We now return to Example \ref{EFladelrho}. The Riemannian metric \eqref{g_restringida} induced in the surfaces $\Sigma_C$, obtained by means of the local parametrization of $\Sigma_C$ given by
$$
\iota(x,u)=\left(x,u,\sqrt{\frac{1}{\kappa}-\kappa(C-u)^2}\right),
$$
is
$$
\tilde{g}=\left(\frac{\kappa+1}{\kappa} - \kappa(C - u)^2\right) dx^2-2\sqrt{\frac{1}{\kappa} - \kappa(C - u)^2}dxdu+du^2.
$$
The intrinsic Gauss curvature is, according to \eqref{Kintrinsic},
\begin{equation}\label{intrinsicK}
K_{\text{int}}=-\frac{e_1(\sqrt{1-\kappa u_1^2})}{u_1}=\kappa.
\end{equation}

On the other hand, the shape operator of $\Sigma_C$ can be represented, in the coordinate frame, by the matrix
$$
S = \begin{pmatrix}
-\frac{1}{2}\sqrt{\frac{1}{\kappa}-\kappa(C-u)^2} & \frac{1}{2}-\frac{1}{2\kappa}+\frac{\kappa}{2}(C-u)^2 \\
\frac{1}{2} & \frac{1}{2}\sqrt{\frac{1}{\kappa}-\kappa(C-u)^2}
\end{pmatrix}.
$$
It can be checked that the extrinsic Gauss curvature of the leaf $\Sigma_C$ is 
$$
K_{\text{ext}}=\det(S)=-\frac{1}{4},
$$
and the mean curvature is $H=\frac{1}{2}\text{tr}(S)=0$, so it is a minimal surface, as stated in Proposition \ref{minimal}.
\end{example}

\begin{example}
    In the case of equation \eqref{ODEKcero} of Example \ref{curvatura0}, similar computations show that the leaves of its energy foliation satisfy
    $$
K_{\text{int}}=K_{\text{ext}}=-\frac{1}{4}.
    $$
\end{example}

\section{Lagrangian mechanical systems}\label{SecLagrangian}
In this section, we will focus on second-order ODEs arising as the Euler-Lagrange equations of one-dimensional mechanical systems defined by a Lagrangian function. The variable $x$ will represent time, $u$ will denote the generalized coordinate, and $u_1$ will represent the generalized velocity.

We begin by providing a justification for the terminology introduced in Definition \ref{GenEnFol}. Consider a mechanical system governed by a Lagrangian $L = L(x,u, u_1)$. For such a system, recall that the function 
\begin{equation}\label{energyfunction} 
    h(u, u_1) := u_1 L_{u_1} - L 
\end{equation} 
is known as the \emph{energy function}. Under standard assumptions for $L$, this function coincides with the total mechanical energy of the system, which is typically expressed as $h(u,u_1) = T(u,u_1) + V(u)$, where $T$ and $V$ represent the kinetic and potential energies, respectively.

In the case of an autonomous Lagrangian system, where $L = L(u, u_1)$, the energy function \eqref{energyfunction} remains conserved along the solutions of the equation of motion of the system. For further details regarding the energy function and its conservation in autonomous systems, the reader is referred to \cite[Section 2.7]{goldstein1950classical}.

Having established these preliminaries, we now state the following result:

\begin{theorem}\label{generalEnergy}
    Given an autonomous one-dimensional Lagrangian system, its energy function \eqref{energyfunction} defines the energy foliation of the corresponding equation of motion.
\end{theorem}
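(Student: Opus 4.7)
The plan is to verify directly that the energy function $h$ satisfies the defining PDE \eqref{conditionE} of the energy foliation. First I would write down the explicit form of the second-order ODE \eqref{ODE1vez} corresponding to the Euler-Lagrange equation of an autonomous Lagrangian $L = L(u,u_1)$. Since $\frac{d}{dx}L_{u_1} - L_u = 0$ expands to $L_{u_1 u_1}\, u_2 + L_{u_1 u} u_1 - L_u = 0$, and assuming the standard regularity condition $L_{u_1 u_1} \neq 0$, we obtain
\begin{equation*}
\phi(u,u_1) = \frac{L_u - u_1 L_{u_1 u}}{L_{u_1 u_1}}.
\end{equation*}

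Next, I would compute the partial derivatives of the energy function $h(u,u_1) = u_1 L_{u_1} - L$, getting $h_u = u_1 L_{u_1 u} - L_u$ and $h_{u_1} = u_1 L_{u_1 u_1}$ (the apparent term $L_{u_1}$ cancels). Substituting these into the left-hand side of \eqref{conditionE} and replacing $\phi$ with the expression above should produce a clean cancellation:
\begin{equation*}
h_u + \frac{\phi}{u_1} h_{u_1} = u_1 L_{u_1 u} - L_u + \phi\, L_{u_1 u_1} = 0.
\end{equation*}
Thus $h$ is a first integral of the characteristic equation of \eqref{conditionE}, which is precisely the condition required by Definition \ref{GenEnFol}.

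To finish, I would verify that $h$ genuinely defines the foliation rather than being degenerate, that is, that the corresponding $\mu$ in \eqref{primitiva} is non-vanishing. From $h_{u_1} = u_1 L_{u_1 u_1}$ and $dh = \mu\, \omega^3$ applied to $e_3 = \partial_{u_1}$, one reads off $\mu = u_1 L_{u_1 u_1}$, which is non-zero throughout $M$ by the regularity of the Lagrangian and the defining condition $u_1 \neq 0$ of $M$. I do not anticipate a major obstacle here: the entire argument is essentially a single calculation exploiting the cancellation $\partial_{u_1}(u_1 L_{u_1} - L) = u_1 L_{u_1 u_1}$ combined with the Euler-Lagrange equation solved for $u_2$. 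The only subtle point worth stating explicitly is that the theorem relies on the Lagrangian being regular, which is precisely what makes the equation of motion fit into the normal form \eqref{ODE1vez} on which the construction of $\mathcal{E}$ is based.
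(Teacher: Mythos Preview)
Your proposal is correct and follows essentially the same approach as the paper: both compute the partial derivatives of $h$, use the Euler--Lagrange equation to identify $\phi$, and verify that $dh = u_1 L_{u_1 u_1}\,\omega^3$ (the paper checks \eqref{primitiva} directly, you check the equivalent PDE \eqref{conditionE}). Your explicit verification that $\mu = u_1 L_{u_1 u_1}$ is non-vanishing on $M$, together with the remark on the regularity hypothesis, is a nice addition that the paper leaves implicit.
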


\begin{proof}
    The Euler-Lagrange equation for a mechanical system defined by the autonomous Lagrangian $L = L(u, u_1)$ is
$$
\frac{d}{dx} L_{u_1}  - L_u = 0
$$
    which can be expanded as
    $$
    u_1L_{uu_1}  + u_2 L_{u_1u_1}  - L_u = 0.
$$
    This is an autonomous second-order ODE, whose corresponding $\omega^3$ (see equation \eqref{coframe}) is given by 
$$
\omega^3=-\frac{L_u-u_1 L_{u u_1}}{u_1 L_{u_1 u_1}}du +du_1.
$$

In order to check that the energy function \eqref{energyfunction} satisfies condition \eqref{primitiva}, we compute the exterior derivative:
$$
dh=\left(u_1 L_{u u_1}-L_u\right) du+u_1 L_{u_1 u_1} du_1.
$$

We observe that $dh=u_1 L_{u_1 u_1}\omega^3$, so the result is proven.
\end{proof}

In the following subsections, we analyze the applicability of the results presented in this paper to specific examples of Lagrangian mechanical systems.

\subsection{Lagrangian for a particle in a gravitational field}

A classical problem in the context of Newtonian gravity involves understanding the behavior of a particle under the influence of the gravitational field created by a mass distribution. Consider a one-dimensional universe with spatial coordinate $u$, and a mass distribution defined by the smooth function $\rho(u)$. This distribution generates a gravitational field that exerts a force on a test particle of mass $m$ located at position $u$. The gravitational potential at position $u$ due to the mass distribution, denoted by $\Phi(u)$, is derived from the Poisson equation \cite{poisson2014gravity}, which in one dimension is given by:
$$
\Phi_{uu}=4 \pi G \rho(u),
$$
where $G$ is the gravitational constant.

The Lagrangian for the test particle moving in this gravitational field is then given by
$$
L = \frac{1}{2} m u_1^2 - m \Phi(u),
$$
and the corresponding equation of motion is
\begin{equation}\label{odemassdensity}
    u_2 = -\Phi_u.
\end{equation}

Within our framework, the metric for the ODE \eqref{odemassdensity} is, according to equation \eqref{metric},
\begin{equation}\label{metricmassdensity}
    g=(1 + u_1^2) dx^2 - 2 u_1 dx du + \left( 1 + \frac{\Phi_u^2}{u_1^2} \right) du^2 + 2 \frac{\Phi_u}{u_1} du du_1 + du_1^2,
\end{equation}
and the orthonormal coframe determined in \eqref{coframe} is
\begin{equation}\label{coframemassdensity}
    \begin{aligned}
        & \omega^1=dx,\\
        & \omega^2=-u_1 dx+du,\\
        & \omega^3=\frac{\Phi_u}{u_1} du+du_1.
\end{aligned}
\end{equation}

To find the energy foliation, we don't need to identify smooth functions $E$ and $\mu$ satisfying $dE=\mu \omega^3$. According to Theorem \ref{generalEnergy}, the energy foliation is described by the energy function of the system, which, in this case, coincides with the total mechanical energy:
$$
h=\frac{1}{2} m u_1^2 + m \Phi(u).
$$
The surfaces of constant energy, defined by $h(u,u_1)=C$, $C\in \mathbb R$, are minimal, according to Proposition \ref{minimal}. And their intrinsic Gauss curvature is given by equation \eqref{Kintrinsic} in Corollary \ref{curvatureKE}
\begin{equation}\label{curvaturemass}
    K_{\text{int}}=-\frac{e_1( -\Phi_u)}{u_1}=\Phi_{uu}=4 \pi G \rho(u).
\end{equation}

Thus, we see that the intrinsic Gauss curvature of the leaves of the energy foliation is directly tied to the mass density generating the gravitational field. Equation \eqref{curvaturemass} reflects the fact that the curvature is proportional to the amount of matter present in the system.

\subsection{The damped harmonic oscillator}
In this subsection, we examine the well-studied damped harmonic oscillator, a fundamental system in classical mechanics, central to understanding a wide variety of physical systems, from mechanical vibrations to electrical circuits. It describes the motion of an oscillating object subject to a restoring force proportional to its displacement, along with a damping force that opposes its velocity.

This system is governed by the second-order differential equation:
\begin{equation}\label{dampedosc}
    u_2 = -\alpha u_1 - \lambda u,
\end{equation}
where $u=u(x)$ represents the displacement as a function of time, $u_1=u_1(x)$ is the velocity, and $u_2=u_2(x)$ is the acceleration. The parameter \(\alpha\) represents the damping coefficient, which quantifies the resistance to motion (such as friction or air resistance), while \(\lambda\) is the spring constant, characterizing the strength of the restoring force. 

It is well known that this system does not present energy conservation in the classical sense. Indeed, this system is typically described by using the time-dependent Lagrangian \cite{goldstein1950classical}
$$
L = \frac{e^{\alpha x}}{2} \left(u_1^2 - \lambda u^2\right),
$$
so that the corresponding energy function \eqref{energyfunction} is
\begin{equation}\label{efdampedosc}
    h= \frac{e^{\alpha x}}{2} \left(u_1^2 + \lambda u^2\right),
\end{equation}
which is clearly not conserved over time.

However, equation \eqref{dampedosc} does not explicitly depend on time, so we can apply the ideas introduced in the previous sections. First, we consider the Riemannian metric associated to equation \eqref{dampedosc}, given by
$$
g=(1 + u_1^2) dx^2 - 2 u_1 dx du + \left( 1 +\left(\alpha+\lambda \frac{u}{u_1}\right)^2 \right) du^2 - 2 \left(\alpha+\lambda \frac{u}{u_1}\right) du du_1 + du_1^2.
$$
The orthonormal coframe introduced in \eqref{coframe} is
\begin{equation}\label{coframedamped}
    \begin{aligned}
        & \omega^1=dx,\\
        & \omega^2=-u_1 dx+du,\\
& \omega^3=\left(\alpha+\lambda\frac{u}{u_1}\right)du+du_1.
\end{aligned}
\end{equation}
To find the energy foliation $\mathcal{E}$, we write down condition \eqref{conditionE}, which in this case corresponds to the following PDE
\begin{equation}\label{pdedamped}
    u_1E_u-(\alpha u_1+\lambda u)E_{u_1}=0.
\end{equation}

In what follows, we will focus on the underdamped case, i.e., $\alpha^2<4\lambda$, but the remaining cases can be developed in an analogous manner. It can be checked that a particular solution for equation \eqref{pdedamped} in this case is given by
\begin{equation}\label{Edamped}
    E=\frac{e^{\frac{\alpha}{\omega} \arctan \left( \frac{\alpha u_1 + 2 \lambda u}{2\omega u_1} \right)}   }{2} (\alpha u u_1 +u_1^2 +  \lambda u^2),
\end{equation}
where $\omega:=\frac{1}{2}\sqrt{4\lambda-\alpha^2}$. The level sets of this function defines the energy foliation $\mathcal{E}$ of the second-order ODE \eqref{dampedosc}.

It is interesting to highlight that the function $E$ defined in \eqref{Edamped} exhibits a structural resemblance to the energy function presented in equation \eqref{efdampedosc}. Moreover, by setting $\alpha = 0$ in equation \eqref{Edamped}, which corresponds to the undamped harmonic oscillator, one recovers the classical expression for the mechanical energy of the harmonic oscillator: 
$$
E = \frac{1}{2}(u_1^2 + \lambda u^2).
$$ 
Consequently, one may regard \eqref{Edamped} as a non-mechanical form of energy that remains conserved along the solutions.

On the other hand, the leaves of the foliation $\mathcal{E}$ are, according to Proposition \ref{minimal}, minimal surfaces with $K_{\text{ext}}=-\frac{1}{4}$. Moreover, by Corollary \ref{curvatureKE}, their intrinsic curvature is
$$
K_{\text{int}}=\lambda-\alpha^2-\alpha \lambda \frac{u}{u_1}.
$$

It is worth noting that the intrinsic curvature of this system is closely related to the damping coefficient and the spring constant. In particular, for a harmonic oscillator without damping, the intrinsic curvature simplifies to the spring constant: $K_{\text{int}}=\lambda$.

Finally, observe that the energy foliation of the second-order ODE \eqref{dampedosc} can also be expressed as the level sets of the smooth function
\begin{equation}\label{Edamped2}
    \tilde{E}:=\ln \left(2 E\right)=\frac{\alpha}{\omega} \arctan \left( \frac{\alpha u_1 + 2 \lambda u}{2\omega u_1} \right)+\ln \left( \alpha u u_1 +u_1^2 +  \lambda u^2\right).
    \end{equation}
By solving the differential equation \eqref{eqLagrang} using $\tilde{E}$, we find the following non-standard autonomous Lagrangian 
\begin{equation}\label{dampedLagrangian}
    L= \frac{2 u_1}{\omega u} \arctan\left( \frac{\alpha u + 2 u_1}{2 u\omega } \right) -\frac{ \alpha }{ \omega}\arctan\left( \frac{\alpha u_1 + 2 \lambda u}{2 \omega u_1} \right) - \ln\left( \alpha u u_1 + \lambda u^2 + u_1^2 \right),
 \end{equation}
 which has previously appeared in the literature \cite{dampedHarmOsc}, although derived through a different methodology.

\section{Concluding remarks} 
In this work, we have introduced a Riemannian metric on an open subset of the first-order jet space via an autonomous second-order ODE. We studied the geometry of the resulting Riemannian manifold in relation to the integrability of the second-order ODE.
In particular, we have shown the link between geodesics and the solutions of the ODE. Furthermore, we have defined the notion of energy foliation and showed its relationship with the classical concept of energy in the case of second-order ODEs arising in mechanics. We have also analyzed the geometry of this foliation.

We believe that this approach, which has enabled us to identify autonomous Lagrangians for autonomous second-order ODEs, thus providing novel insights into the inverse problem of the calculus of variations, has the potential to lead to further developments in the field of analysis and geometry.


\bibliographystyle{ieeetr}
\bibliography{references}

\end{document}